%
%
%

\documentclass[10pt]{amsart}
\usepackage{amssymb,MnSymbol}
\usepackage{amsthm,amsmath}

\title[]{On the extensions of Barlow-Proschan importance index and system signature to dependent lifetimes} 

\author{Jean-Luc Marichal}
\address{Mathematics Research Unit, FSTC, University of Luxembourg, 6, rue Coudenhove-Kalergi, L-1359 Luxembourg, Luxembourg}
\email{jean-luc.marichal[at]uni.lu }

\author{Pierre Mathonet}
\address{University of Li\`ege, Department of Mathematics, Grande Traverse, 12 - B37, B-4000 Li\`ege, Belgium}
\email{p.mathonet[at]ulg.ac.be }

\date{August 29, 2012}

\begin{document}

\theoremstyle{plain}
\newtheorem{theorem}{Theorem}
\newtheorem{lemma}[theorem]{Lemma}
\newtheorem{proposition}[theorem]{Proposition}
\newtheorem{corollary}[theorem]{Corollary}
\newtheorem{fact}[theorem]{Fact}
\newtheorem*{main}{Main Theorem}

\theoremstyle{definition}
\newtheorem{definition}[theorem]{Definition}
\newtheorem{example}[theorem]{Example}

\theoremstyle{remark}
\newtheorem*{conjecture}{onjecture}
\newtheorem{remark}{Remark}
\newtheorem{claim}{Claim}

\newcommand{\N}{\mathbb{N}}
\newcommand{\R}{\mathbb{R}}
\newcommand{\Q}{\mathbb{Q}}
\newcommand{\Vspace}{\vspace{2ex}}
\newcommand{\bfx}{\mathbf{x}}
\newcommand{\bfy}{\mathbf{y}}
\newcommand{\bfh}{\mathbf{h}}
\newcommand{\bfe}{\mathbf{e}}
\newcommand{\os}{\mathrm{os}}
\newcommand{\dd}{\,\mathrm{d}}

\begin{abstract}
For a coherent system the Barlow-Proschan importance index, defined when the component lifetimes are independent, measures the probability that
the failure of a given component causes the system to fail. Iyer (1992) extended this concept to the more general case when the component
lifetimes are jointly absolutely continuous but not necessarily independent. Assuming only that the joint distribution of component lifetimes has no ties, we give an explicit expression for this extended index in terms of
the discrete derivatives of the structure function and provide an interpretation of it as a probabilistic value, a concept introduced in game
theory. This enables us to interpret Iyer's formula in this more general setting. We also discuss the analogy between this concept and that of system signature and show how it can be used to define a symmetry index for
systems.
\end{abstract}

\keywords{Coherent system, component importance, Barlow-Proschan index, dependent lifetimes, system signature.}

\subjclass[2010]{62N05, 90B25, 94C10}

\maketitle

\section{Introduction}

Consider an $n$-component system $S=(n,\phi,F)$, where $\phi$ denotes the associated structure function $\phi\colon\{0,1\}^n\to\{0,1\}$ (which
expresses the state of the system in terms of the states of its components) and $F$ denotes the joint c.d.f.\ of the component lifetimes
$X_1,\ldots,X_n$, that is,
$$
F(t_1,\ldots,t_n) ~=~ \Pr(X_1\leqslant t_1,\ldots,X_n\leqslant t_n),\qquad t_1,\ldots,t_n\geqslant 0.
$$
We assume that the system is \emph{semicoherent}, i.e., the
structure function $\phi$ is nondecreasing in each variable and satisfies the conditions $\phi(0,\ldots,0)=0$ and $\phi(1,\ldots,1)=1$.

To avoid cumbersome notation, we identify Boolean $n$-tuples $\bfx\in\{0,1\}^n$ and subsets $A\subseteq [n]=\{1,\ldots,n\}$ by setting $x_i=1$ if and only if $i\in A$. We thus use the same symbol to denote both a function $f\colon\{0,1\}^n\to\R$ and its corresponding set function $f\colon 2^{[n]}\to\R$, interchangeably. For instance we have $\phi(1,\ldots,1)=\phi([n])$.

An \emph{importance index} was introduced in 1975 by Barlow and Proschan \cite{BarPro75} for systems whose components have continuous and
independent lifetimes as the $n$-tuple $\mathbf{I}_{\mathrm{BP}}$ whose $j$th coordinate $I_{\mathrm{BP}}^{(j)}$ is the probability that the
failure of component $j$ causes the system to fail. In mathematical terms,
\begin{equation}\label{eq:sa9fd7a}
I_{\mathrm{BP}}^{(j)} ~=~ \Pr(T=X_{j}),\qquad j\in [n],
\end{equation}
where $T$ denotes the system lifetime.

When the components have i.i.d.\ lifetimes, this index reduces to the $n$-tuple $\mathbf{b}=(b_1,\ldots,b_n)$, where
\begin{equation}\label{eq:asd465d}
b_j ~=~ \sum_{A\subseteq [n]\setminus\{j\}}\frac{1}{n{n-1\choose |A|}}\,\Delta_j\phi(A)~=~\frac{1}{n}\,\sum_{k=0}^{n-1}\frac{1}{{n-1\choose
k}}\,\sum_{\textstyle{A\subseteq[n]\setminus\{j\}\atop |A|=k}}\Delta_j\phi(A)
\end{equation}
and $\Delta_j\phi(A)=\phi(A\cup\{j\})-\phi(A\setminus\{j\})$. Thus, in the i.i.d.\ case the
probability $(\ref{eq:sa9fd7a})$ does not depend of the c.d.f.\ $F$. Due to this feature it is sometimes referred to as a \emph{structural
importance}. Note that the expression on the right-hand side of (\ref{eq:asd465d}) was actually
defined in 1953 in cooperative game theory, where it is known as the Shapley-Shubik value \cite{Sha53,ShaShu54}.

The concept of \emph{signature}, which reveals a strong analogy with that of Barlow-Proschan importance index  (\ref{eq:sa9fd7a}), was introduced in 1985 by Samaniego \cite{{Sam85}} (see also \cite{Sam07}) for systems whose
components have continuous and i.i.d.\ lifetimes as the $n$-tuple $\mathbf{s}=(s_1,\ldots,s_n)$, where $s_k$ is the probability that the $k$th
component failure causes the system to fail. That is,
$$
s_k ~=~ \Pr(T=X_{k:n}),
$$
where $X_{k:n}$ denotes the $k$th smallest lifetime, i.e., the $k$th order statistic obtained by rearranging the variables $X_1,\ldots,X_n$ in
ascending order of magnitude.

Boland \cite{Bol01} showed that $s_k$ can be explicitly written in the form
\begin{equation}\label{eq:asad678}
s_k ~=~ \sum_{\textstyle{A\subseteq [n]\atop |A|=n-k+1}}\frac{1}{{n\choose |A|}}\,\phi(A)-\sum_{\textstyle{A\subseteq [n]\atop
|A|=n-k}}\frac{1}{{n\choose |A|}}\,\phi(A)\, .
\end{equation}
Just as for the probability $\Pr(T=X_{j})$, in the i.i.d.\ case the probability $\Pr(T=X_{k:n})$ does not depend on the c.d.f.\ $F$. Thus
$\mathbf{s}$ can be regarded as the \emph{structural signature}.

\begin{example}
For a system made up of three serially connected components with i.i.d.\ lifetimes, we have $\mathbf{s}=(1,0,0)$ and $\mathbf{b}=(1/3,1/3,1/3)$.
\end{example}

Iyer \cite{Iye92} extended the Barlow-Proschan index to the general dependent case where the c.d.f.\ $F$ is
absolutely continuous. In this setting the index $\mathbf{I}_{\mathrm{BP}}$ may depend not only on the structure function $\phi$ but
also on the c.d.f.\ $F$. Specifically, starting from the multilinear form of $\phi$,
$$
\phi(\mathbf{x}) ~=~ \sum_{A\subseteq [n]}m_{\phi}(A)\,\prod_{i\in A}x_i\, ,
$$
where $m_{\phi}\colon 2^{[n]}\to\R$ is the M\"obius transform of $\phi$, defined by
$$
m_{\phi}(A) ~=~ \sum_{B\subseteq A}(-1)^{|A|-|B|}\,\phi(B)\, ,
$$
Iyer obtained the integral formula
\begin{equation}\label{eq:8sfd76fd}
I_{\mathrm{BP}}^{(j)} ~=~ \sum_{A\subseteq [n]\setminus\{j\}}m_{\phi}(A\cup\{j\})\,\int_0^{\infty}\frac{d}{d t_j}\Pr\Big(X_j\leqslant t_j~\mbox{~and~}~t<\min_{i\in A}X_i\Big)\Big|_{t_j=t}\, dt.
\end{equation}

The concept of signature was also extended to the general case of dependent lifetimes; see \cite{MarMat11} (see also \cite{NavSamBalBha08} for
an earlier work). Denoting this ``extended'' signature by the $n$-tuple $\mathbf{p}=(p_1,\ldots,p_n)$, where $p_k=\Pr(T=X_{k:n})$, the authors
\cite{MarMat11} proved that, if $F$ is absolutely continuous (actually the assumption that there are no ties among the component lifetimes is sufficient), then
\begin{equation}\label{eq:asghjad678}
p_k=\sum_{|A|=n-k+1}q(A)\,\phi(A)-\sum_{|A|=n-k}q(A)\,\phi(A),
\end{equation}
where the function $q\colon 2^{[n]}\to [0,1]$, called the \emph{relative quality function} associated with $F$, is defined by
$$
q(A) ~=~ \Pr\Big(\max_{i\notin A}X_i<\min_{i\in A}X_i\Big).\footnote{Thus $q(A)$ is the probability that the best $|A|$ components are precisely
those in $A$.}
$$
Thus (\ref{eq:asghjad678}) is the non-i.i.d.\ extension of (\ref{eq:asad678}). Note also that the function $q$  has the immediate property
\begin{equation}\label{eq:s7df5}
\sum_{|A|=k}q(A) ~=~ 1,\qquad k\in \{0,\ldots,n\}.
\end{equation}

In this paper, assuming only that $F$ has no ties, we give an alternative expression for $I_{\mathrm{BP}}^{(j)}$ as a weighted arithmetic mean
over $A\subseteq [n]\setminus\{j\}$ of $\Delta_j\phi(A)$ and whose coefficients depend only on $F$ (Theorem~\ref{thm:s7df5dssfd}), thus
providing the analog of (\ref{eq:asghjad678}) for the Barlow-Proschan index. This enables us to retrieve and interpret Iyer's formula (\ref{eq:8sfd76fd}) in this more general setting of distributions having no ties (Corollary~\ref{cor19092011}).
We give necessary and sufficient conditions on $F$ for $I_{\mathrm{BP}}^{(j)}$ to always reduce to (\ref{eq:asd465d}) regardless of the structure function considered (Proposition~\ref{prop:8sadf76}). We also
provide explicit expressions for the coefficient of $\Delta_j\phi(A)$ in the general continuous and independent continuous cases
(Propositions~\ref{prop:a78sd6} and \ref{lemma:6d5f67}) and examine the special case of independent Weibull lifetimes, which includes the exponential model
(Corollary~\ref{cor:vx6cx6c5}). Finally, we show how the Barlow-Proschan index can be used to measure a symmetry degree of any system
(Section~\ref{sec:a6d5a}).

\section{Explicit expressions}
\label{sec:8asdf6}
Throughout we assume that the joint c.d.f.\ $F$ of the lifetimes has \emph{no ties}, i.e., we have $\Pr(X_i = X_j) = 0$ for every
$i\neq j$. For every $j\in [n]$, we define the function $q_j\colon 2^{[n]\setminus\{j\}}\to [0,1]$ as
\begin{equation}\label{eq:sds5556}
q_j(A) ~=~ \Pr\Big(\max_{i\notin A\cup\{j\}}X_i<X_j<\min_{i\in A}X_i\Big)\, .
\end{equation}
For instance, when $n=4$ we have
\begin{eqnarray*}
q_2(\{1,3\}) &=& \Pr(X_4<X_2<\min\{X_1,X_3\})\\
&=& \Pr(X_4<X_2<X_1<X_3)+\Pr(X_4<X_2<X_3<X_1)\, .
\end{eqnarray*}
From this example we immediately see that (\ref{eq:sds5556}) can be rewritten as
\begin{equation}\label{eq:sds555}
q_j(A) ~=~ \sum_{\textstyle{\sigma\in\mathfrak{S}_n{\,}:{\,}\{\sigma(n-|A|+1),\ldots,\sigma(n)\}=A\atop \sigma(n-|A|)=j }}\Pr(X_{\sigma(1)}<\cdots <X_{\sigma(n)})\, ,
\end{equation}
where $\mathfrak{S}_n$ denotes the set of permutations on $[n]$.

Thus defined, $q_j(A)$ is the probability that the components that are better than component $j$ are precisely those in $A$.\footnote{By definition the functions $q_j$ ($j\in [n]$) clearly depend only on the distribution function $F$ (and not on the structure function $\phi$).} It then follows
immediately that
\begin{equation}\label{eq:87sdaf6}
\sum_{A\subseteq [n]\setminus\{j\}}q_j(A) ~=~ 1,\qquad j\in [n].
\end{equation}
We also observe that
\begin{eqnarray}
q(A) &=& \sum_{j\notin A}q_j(A),\qquad A\neq [n],\label{eq:7sdf5}\\
q(A) &=& \sum_{j\in A}q_j(A\setminus\{j\}),\qquad A\neq\varnothing.\label{eq:7sdf5a}
\end{eqnarray}
Moreover, $q_j(\varnothing)=q(\{j\})$ is the probability that component $j$ is the best component, while
$q_j([n]\setminus\{j\})=q([n]\setminus\{j\})$ is the probability that component $j$ is the worst component.

\begin{proposition}\label{prop:s7dff5dssfd}
If the variables $X_1,\ldots,X_n$ are exchangeable, then
\begin{equation}\label{eq:qkaa}
q_j(A) ~=~ \frac{1}{(n-|A|){n\choose |A|}} ~=~ \frac{1}{n{n-1\choose |A|}}
\end{equation}
for every $j\in
[n]$ and every $A\subseteq [n]\setminus\{j\}$.
\end{proposition}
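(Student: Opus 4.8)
The plan is to use the permutation expansion (\ref{eq:sds555}) together with exchangeability to reduce the statement to a counting argument. First I would observe that, since $F$ has no ties, the events $\{X_{\sigma(1)}<\cdots<X_{\sigma(n)}\}$ indexed by $\sigma\in\mathfrak{S}_n$ are pairwise disjoint and their union differs from the whole sample space by a null set, so their probabilities sum to $1$. Exchangeability of $X_1,\ldots,X_n$ means that the joint distribution is invariant under any permutation of the indices; hence $\Pr(X_{\sigma(1)}<\cdots<X_{\sigma(n)})$ takes the same value for every $\sigma$. Combining these two facts forces each such probability to equal $1/n!$.

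With this in hand, formula (\ref{eq:sds555}) gives $q_j(A)=N/n!$, where $N$ is the number of permutations $\sigma$ satisfying the two constraints $\{\sigma(n-|A|+1),\ldots,\sigma(n)\}=A$ and $\sigma(n-|A|)=j$. The second step is simply to count $N$. Writing $k=|A|$, the last $k$ positions must be filled by the elements of $A$ in any order ($k!$ choices), the position $n-k$ is forced to equal $j$, and the first $n-k-1$ positions must be filled by the remaining elements of $[n]\setminus(A\cup\{j\})$ in any order ($(n-k-1)!$ choices). Hence $N=k!\,(n-k-1)!$ and $q_j(A)=k!\,(n-k-1)!/n!$.

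The final step is the routine algebraic simplification verifying that $k!\,(n-k-1)!/n!$ coincides with both closed forms in (\ref{eq:qkaa}), which follows at once from $(n-k)\binom{n}{k}=n\binom{n-1}{k}=n!/(k!\,(n-k-1)!)$. There is no serious obstacle here: the only point requiring care is the justification that all $n!$ orderings are equiprobable, which rests squarely on the no-ties assumption (to discard the diagonal events so that the probabilities sum to exactly $1$) and on exchangeability (to equate those probabilities); everything else is bookkeeping.
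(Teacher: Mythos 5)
Your proof is correct and follows essentially the same route as the paper: both use the permutation expansion (\ref{eq:sds555}), the fact that exchangeability and the no-ties assumption make all $n!$ orderings equiprobable, and a count of the admissible permutations (the paper packages this count as the multinomial coefficient $\binom{n}{n-|A|-1,\,1,\,|A|}$). Your version merely spells out the equiprobability justification and the bookkeeping in more detail.
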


\begin{proof}
Since the variables $X_1,\ldots,X_n$ are exchangeable, by (\ref{eq:sds555}) we immediately obtain
$$
q_j(A) ~=~ \frac{1}{{n\choose n-|A|-1{\,},{\,}1{\,},{\,}|A|}}\, ,
$$
where the denominator is a multinomial coefficient.
\end{proof}

We now give an expression for $I_{\mathrm{BP}}^{(j)}$ in terms of the functions $q_j$ and $\Delta_j\phi$. This expression, given in
(\ref{eq:78s6ss7a}) below, clearly extends (\ref{eq:asd465d}) just as formula (\ref{eq:asghjad678}) extends (\ref{eq:asad678}).

\begin{theorem}\label{thm:s7df5dssfd}
For every $j\in [n]$, we have
\begin{equation}\label{eq:78s6ss7a}
I_{\mathrm{BP}}^{(j)} ~=~ \sum_{A\subseteq [n]\setminus\{j\}} q_j(A)\,\Delta_j\phi(A) ~=~ \sum_{A\subseteq [n]}(-1)^{|\{j\}\setminus A|}\,
q_j(A\setminus\{j\})\, \phi(A).
\end{equation}
\end{theorem}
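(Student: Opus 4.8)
The plan is to prove the identity by conditioning on the relative ordering of the component lifetimes, which is well-defined precisely because $F$ has no ties. Since $\Pr(X_i = X_j)=0$ for all $i \neq j$, the lifetimes induce, with probability one, a total order, so the sample space decomposes (up to a null set) into the $n!$ events $\{X_{\sigma(1)} < \cdots < X_{\sigma(n)}\}$ indexed by $\sigma \in \mathfrak{S}_n$. On each such event the system lifetime $T = \phi$-evaluated-along-the-failure-sequence is determined, and the failure of component $j$ causes the system to fail exactly when, at the moment component $j$ fails, the system transitions from working to failed. First I would write $I_{\mathrm{BP}}^{(j)} = \Pr(T = X_j)$ as a sum over permutations $\sigma$ of the probability $\Pr(X_{\sigma(1)} < \cdots < X_{\sigma(n)})$, restricted to those $\sigma$ for which the failure of the component occupying the position of $j$ is critical.

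The key combinatorial step is to identify, for a fixed permutation $\sigma$, when the failure of component $j$ is the one that brings the system down. Think of components failing one at a time in the order $\sigma(1), \sigma(2), \ldots$; just before component $j$ fails, the still-functioning components are exactly those that outlive $j$, namely the set $A = \{i : X_i > X_j\}$, i.e.\ the components better than $j$. The state of the system just before $j$ fails is $\phi(A \cup \{j\})$ and just after is $\phi(A)$, so the failure of $j$ is critical precisely when $\phi(A\cup\{j\}) - \phi(A) = 1$, that is, when $\Delta_j\phi(A) = 1$ (note $j \notin A$, so $\Delta_j\phi(A) = \phi(A\cup\{j\}) - \phi(A)$). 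Grouping the permutations by the set $A$ of components better than $j$, and recalling that by definition (\ref{eq:sds5556})--(\ref{eq:sds555}) the total probability of all orderings in which $A$ is exactly the set of components outliving $j$ equals $q_j(A)$, I obtain
\begin{equation*}
I_{\mathrm{BP}}^{(j)} ~=~ \sum_{A\subseteq [n]\setminus\{j\}} q_j(A)\,\Delta_j\phi(A),
\end{equation*}
which is the first expression in (\ref{eq:78s6ss7a}). Here I am using that $\Delta_j\phi(A) \in \{0,1\}$ by semicoherence (monotonicity), so multiplying $q_j(A)$ by $\Delta_j\phi(A)$ simply selects the criticality contribution.

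For the second equality I would expand $\Delta_j\phi(A) = \phi(A\cup\{j\}) - \phi(A)$ and reindex. Writing the sum over $A \subseteq [n]\setminus\{j\}$ as a sum over all $A \subseteq [n]$, the term $\phi(A\cup\{j\})$ contributes (via $B = A \cup \{j\}$, a set containing $j$) with coefficient $q_j(B\setminus\{j\})$ and sign $+1$, while the term $\phi(A)$ contributes (for sets $B = A$ not containing $j$) with coefficient $q_j(B)$ and sign $-1$. Both cases are captured uniformly by the factor $(-1)^{|\{j\}\setminus A|}\, q_j(A\setminus\{j\})$, since $|\{j\}\setminus A| = 0$ when $j \in A$ and $=1$ when $j\notin A$, giving exactly the second form in (\ref{eq:78s6ss7a}).

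The main obstacle I anticipate is making the first step fully rigorous as a measure-theoretic statement rather than a heuristic about ``failing one at a time.'' The delicate point is justifying that $\Pr(T = X_j)$ decomposes cleanly over the ordering events and that on each ordering event the identity of the critical component is deterministic; this requires the no-ties hypothesis to guarantee that the order statistics are almost surely distinct and that $T$ coincides almost surely with one specific $X_j$ on each event. Once the event $\{T = X_j\}$ is shown to equal, up to a null set, the union over $A$ of the orderings with critical set $A$ and $\Delta_j\phi(A) = 1$, the rest is the bookkeeping recorded above. The comparison with Iyer's integral formula (\ref{eq:8sfd76fd}) is not needed here and is deferred to the stated corollary.
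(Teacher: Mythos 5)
Your proposal is correct and follows essentially the same route as the paper: condition on the $n!$ ordering events (valid by the no-ties hypothesis), observe that on each such event the failure of component $j$ is critical exactly when $\Delta_j\phi(A)=1$ for $A$ the set of components outliving $j$, group the permutations by $A$ using the definition of $q_j(A)$, and obtain the second form by the routine sign reindexing. The only cosmetic difference is that the paper phrases the criticality condition in terms of cut sets rather than the surviving-component set, and both treatments leave the measure-theoretic decomposition at the same informal level you flag.
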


\begin{proof}
For every fixed $\sigma\in\mathfrak{S}_n$ we must have
\begin{equation}\label{eq:8s6ffs}
\Pr(T=X_j\mid X_{\sigma(1)}<\cdots <X_{\sigma(n)}) ~=~  \phi\big(\{\sigma(i),\ldots,\sigma(n)\}\big)-\phi\big(\{\sigma(i+1),\ldots,\sigma(n)\}\big),
\end{equation}
where $i=\sigma^{-1}(j)$. Indeed, the left-hand expression of (\ref{eq:8s6ffs}) takes its values in $\{0,1\}$ and is exactly $1$ if and only if
$\{\sigma(1),\ldots,\sigma(i-1)\}$ is not a cut set and $\{\sigma(1),\ldots,\sigma(i)\}$ is a cut set.\footnote{Recall that a subset $K\subseteq
[n]$ of components is a \emph{cut set} for the function $\phi$ if $\phi([n]\setminus K)=0$.} That is,
$$
\phi\big(\{\sigma(i),\ldots,\sigma(n)\}\big) ~=~ 1\quad\mbox{and}\quad\phi\big(\{\sigma(i+1),\ldots,\sigma(n)\}\big) ~=~ 0.
$$
Now, by combining (\ref{eq:8s6ffs}) with the law of total probability, we get
$$
I_{\mathrm{BP}}^{(j)}  ~=~
\sum_{\sigma\in\mathfrak{S}_n}\Big(\phi\big(\{\sigma(\sigma^{-1}(j)),\ldots,\sigma(n)\}\big)-\phi\big(\{\sigma(\sigma^{-1}(j)+1),\ldots,\sigma(n)\}\big)\Big)\,\Pr(X_{\sigma(1)}<\cdots
<X_{\sigma(n)}).
$$
Grouping the terms for which $\{\sigma(\sigma^{-1}(j)+1),\ldots,\sigma(n)\}$ is a fixed set $A$ and then summing over $A$, we obtain
$$
I_{\mathrm{BP}}^{(j)}  ~=~ \sum_{A\subseteq [n]\setminus\{j\}} \big(\phi(A\cup\{j\})-\phi(A)\big) \sum_{\textstyle{\sigma\in\mathfrak{S}_n{\,}:{\,}\{\sigma(n-|A|+1),\ldots,\sigma(n)\}=A\atop \sigma(n-|A|)=j }}\Pr(X_{\sigma(1)}<\cdots <X_{\sigma(n)}).
$$
The result then follows from (\ref{eq:sds555}). The second expression in (\ref{eq:78s6ss7a}) follows immediately from the first one.
\end{proof}

\begin{example}
Assume that $\phi$ defines a $k$-out-of-$n$ structure, that is, $\phi(\bfx)=x_{k:n}$, where $x_{k:n}$ is the $k$th order statistic of the
variables $x_1,\ldots,x_n$. In this case we have $\phi(A)=1$ if and only if $|A|\geqslant n-k+1$ and hence, for every $j\in [n]$, we have
$\Delta_{j}\phi(A)=1$ if and only if $|A|=n-k$. By Theorem~\ref{thm:s7df5dssfd},
$$
I_{\mathrm{BP}}^{(j)} ~=~ \sum_{\textstyle{A\subseteq [n]\setminus\{j\}\atop |A|=n-k}} q_{j}(A)
$$
is the probability that component $j$ has the $k$th smallest lifetime. This result was expected since
$I_{\mathrm{BP}}^{(j)}=\Pr(X_{j}=T)=\Pr(X_{j}=X_{k:n})$ by definition.
\end{example}

\begin{example}
Consider a $5$-component system whose structure function $\phi\colon\{0,1\}^5\to\{0,1\}$ is defined by
$$
\phi(x_1,\ldots,x_5) ~=~ x_1\, x_4\amalg x_2\, x_5\amalg x_1\, x_3\, x_5\amalg x_2\, x_3\, x_4
$$
(see Figure~\ref{fig:bs}), where $\amalg$ is the binary coproduct (Boolean disjunction) operation defined by $x\amalg y=1-(1-x)(1-y)$. In this case we have $\Delta_3\phi(A)=1$ if and only if $A=\{1,5\}$ or $A=\{2,4\}$. By Theorem~\ref{thm:s7df5dssfd}, we then have
\begin{eqnarray*}
I_{\mathrm{BP}}^{(3)} &=& q_3(\{1,5\})+q_3(\{2,4\})\\
&=& \Pr(\max\{X_2,X_4\}<X_3<\min\{X_1,X_5\})+\Pr(\max\{X_1,X_5\}<X_3<\min\{X_2,X_4\}){\,}.
\end{eqnarray*}
Evidently this value depends on the c.d.f.\ of the component lifetimes and reduces to $1/15$ in the exchangeable case (see Proposition~\ref{prop:s7dff5dssfd}).
\end{example}

\setlength{\unitlength}{4ex}
\begin{figure}[htbp]\centering
\begin{picture}(11,4)
\put(3,0.5){\framebox(1,1){$2$}} \put(3,2.5){\framebox(1,1){$1$}} \put(5,1.5){\framebox(1,1){$3$}} \put(7,0.5){\framebox(1,1){$5$}}
\put(7,2.5){\framebox(1,1){$4$}}%
\put(0,2){\line(1,0){1.5}}\put(1.5,2){\line(2,-1){1.5}}\put(5.5,0){\line(-2,1){1.5}}\put(1.5,2){\line(2,1){1.5}}\put(5.5,4){\line(-2,-1){1.5}}%
\put(0,2){\circle*{0.15}}%
\put(9.5,2){\line(1,0){1.5}}\put(5.5,0){\line(2,1){1.5}}\put(9.5,2){\line(-2,-1){1.5}}\put(5.5,4){\line(2,-1){1.5}}\put(9.5,2){\line(-2,1){1.5}}%
\put(11,2){\circle*{0.15}}%
\put(5.5,0){\line(0,1){1.5}}\put(5.5,4){\line(0,-1){1.5}}
\end{picture}
\caption{Bridge structure} \label{fig:bs}
\end{figure}
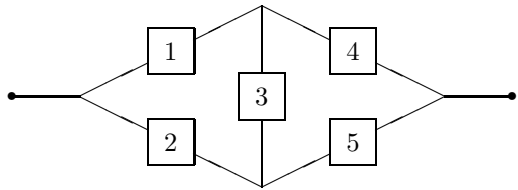

Formula (\ref{eq:78s6ss7a}) provides an explicit expression for the Barlow-Proschan index under the sole assumption that $F$ has no ties, which leads to easy interpretations and computations and reveals an interesting analogy with the concept of signature. Combining this formula with (\ref{eq:87sdaf6}) also shows that $\mathbf{I}_{\mathrm{BP}}$ is a \emph{probabilistic value}, as
defined in game theory by Weber \cite{Web88}. Moreover, $\mathbf{I}_{\mathrm{BP}}$ is \emph{efficient} in the sense that $\sum_{j=1}^nI_{\mathrm{BP}}^{(j)}=1$.

We now use (\ref{eq:78s6ss7a}) to derive Iyer's formula (\ref{eq:8sfd76fd}) in an interpretable form and without the absolute continuity assumption. For every $j\in [n]$, we define the function $r_j\colon 2^{[n]\setminus\{j\}}\to [0,1]$ as
\[
r_j(A) ~=~ \Pr\Big(X_j<\min_{i\in A}X_i\Big).
\]
That is, $r_j(A)$ is the probability that the components in $A$ are better than component $j$. We then have
\begin{equation}\label{eq:6erwe}
r_j(A)~=~\sum_{\textstyle{B\subseteq [n]\setminus\{j\}\atop B\supseteq A}}q_j(B)\, ,
\end{equation}
which can be inverted into
\begin{equation}\label{eq:6erwe2}
q_j(A)~=~\sum_{\textstyle{B\subseteq [n]\setminus\{j\}\atop B\supseteq A}}(-1)^{|B|-|A|}\, r_j(B).
\end{equation}

\begin{corollary}\label{cor19092011}
For every $j\in [n]$, we have
$$
I_{\mathrm{BP}}^{(j)} ~=~ \sum_{A\subseteq [n]\setminus\{j\}} r_j(A)\, m_{\phi}(A\cup\{j\}).
$$
\end{corollary}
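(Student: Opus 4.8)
The plan is to start from the first expression in Theorem~\ref{thm:s7df5dssfd}, namely $I_{\mathrm{BP}}^{(j)} = \sum_{A\subseteq[n]\setminus\{j\}}q_j(A)\,\Delta_j\phi(A)$, and to re-express the discrete derivative $\Delta_j\phi$ through the M\"obius transform $m_{\phi}$. Starting from the multilinear form of $\phi$, one finds for every $A\subseteq[n]\setminus\{j\}$ that
$$
\Delta_j\phi(A) ~=~ \sum_{B\subseteq A}m_{\phi}(B\cup\{j\}),
$$
that is, the set function $A\mapsto m_{\phi}(A\cup\{j\})$ is precisely the M\"obius transform of $A\mapsto\Delta_j\phi(A)$ on the Boolean lattice $2^{[n]\setminus\{j\}}$. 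I would record this identity first, since it is the only nonroutine ingredient.

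Next I would substitute this expansion into the expression of Theorem~\ref{thm:s7df5dssfd} and interchange the two summations, grouping the terms according to the inner index $B$:
$$
I_{\mathrm{BP}}^{(j)} ~=~ \sum_{A\subseteq[n]\setminus\{j\}}q_j(A)\sum_{B\subseteq A}m_{\phi}(B\cup\{j\}) ~=~ \sum_{B\subseteq[n]\setminus\{j\}}m_{\phi}(B\cup\{j\})\sum_{\textstyle{A\subseteq[n]\setminus\{j\}\atop A\supseteq B}}q_j(A).
$$
The inner sum $\sum_{A\supseteq B}q_j(A)$ is exactly $r_j(B)$ by the defining relation (\ref{eq:6erwe}), so after renaming $B$ as $A$ the claimed formula follows.

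I expect no genuine obstacle here; the argument is a single M\"obius-inversion manipulation. The one point requiring a little care is the first identity, i.e., checking that the M\"obius transform of the ``$j$-derivative section'' of $\phi$ is the restriction of $m_{\phi}$ to the sets containing $j$. This is most transparently seen from the multilinear representation $\phi(\mathbf{x})=\sum_{B}m_{\phi}(B)\prod_{i\in B}x_i$, whence $\Delta_j\phi(\mathbf{x})=\sum_{A\subseteq[n]\setminus\{j\}}m_{\phi}(A\cup\{j\})\prod_{i\in A}x_i$; alternatively it follows directly from the definition of $m_{\phi}$ combined with $\Delta_j\phi(A)=\phi(A\cup\{j\})-\phi(A)$ for $A\subseteq[n]\setminus\{j\}$. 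One could equally well run the computation backwards by inserting (\ref{eq:6erwe2}) into the claimed right-hand side, but the forward M\"obius-inversion route above seems cleanest.
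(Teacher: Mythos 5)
Your proof is correct and follows essentially the same route as the paper's: both substitute the M\"obius expansion of $\phi$ into Theorem~\ref{thm:s7df5dssfd}, interchange the summations, and conclude via the relation $r_j(B)=\sum_{A\supseteq B}q_j(A)$ from (\ref{eq:6erwe}). The only cosmetic difference is that you start from the first expression in (\ref{eq:78s6ss7a}) and encapsulate the key cancellation in the identity $\Delta_j\phi(A)=\sum_{B\subseteq A}m_{\phi}(B\cup\{j\})$, whereas the paper starts from the second expression and observes instead that the inner sum vanishes when $B\not\ni j$ --- these are the same fact in two guises.
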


\begin{proof}
Using the inverse M\"obius transform $\phi(A)=\sum_{B\subseteq A}m_{\phi}(B)$ in the right-hand side of (\ref{eq:78s6ss7a}) and then permuting the resulting sums, we obtain
\[
I_{\mathrm{BP}}^{(j)} ~=~ \sum_{B\subseteq [n]}m_{\phi}(B)\,\sum_{A\supseteq B}(-1)^{|\{j\}\setminus A|}\,
q_j(A\setminus\{j\}).
\]
Noticing that the inner sum vanishes whenever $B\not\ni j$ and setting $A'=A\setminus \{j\}$ and $B'=B\setminus\{j\}$, we obtain
\[
I_{\mathrm{BP}}^{(j)} ~=~ \sum_{B'\subseteq [n]\setminus\{j\}}m_{\phi}(B'\cup\{j\})\,\sum_{\textstyle{A'\subseteq [n]\setminus\{j\}\atop A'\supseteq B'}}q_j(A').
\]
We then conclude by (\ref{eq:6erwe}).
\end{proof}

Just as for the Barlow-Proschan index, the signature $\mathbf{p}$ also has an interesting expression in terms of the M\"obius transform of $\phi$.

\begin{proposition}\label{prop:fs87f}
For every $k\in [n]$, we have
$$
p_k ~=~ \sum_{A\subseteq [n]} m_{\phi}(A)\,\Pr\Big(X_{k:n}=\min_{i\in A}X_i\Big),
$$
where $\Pr(X_{k:n}=\min_{i\in A}X_i)$ is the probability that the $k$th failure is that of the worst component in $A$.
\end{proposition}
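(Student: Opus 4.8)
The plan is to mirror the derivation of Corollary~\ref{cor19092011}, starting from the signature formula (\ref{eq:asghjad678}) and substituting the inverse M\"obius transform $\phi(A)=\sum_{B\subseteq A}m_\phi(B)$, then reorganizing the double sum so that each M\"obius coefficient $m_\phi(B)$ gets paired with a recognizable probability. First I would write $p_k$ as the difference of two sums over sets of size $n-k+1$ and $n-k$, expand each $\phi(A)$ via its M\"obius transform, and interchange the order of summation to collect the coefficient of $m_\phi(B)$ for each $B\subseteq[n]$. This produces an inner sum of the shape
\[
m_\phi(B)\Bigg(\sum_{\textstyle{A\supseteq B\atop|A|=n-k+1}}q(A)-\sum_{\textstyle{A\supseteq B\atop|A|=n-k}}q(A)\Bigg).
\]

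The heart of the argument is then to identify this bracketed quantity as the probability $\Pr(X_{k:n}=\min_{i\in B}X_i)$, i.e.\ that the $k$th-smallest lifetime is exactly the minimum over the components in $B$. Recall from the definition that $q(A)=\Pr(\max_{i\notin A}X_i<\min_{i\in A}X_i)$ is the probability that the components in $A$ are precisely the best $|A|$ components. I would argue combinatorially using the permutation description underlying $q$: summing $q(A)$ over all $A\supseteq B$ with $|A|=n-k+1$ gives the probability that the best $n-k+1$ components all lie in a superset of $B$ containing $B$, equivalently that every component of $B$ ranks among the top $n-k+1$ (so that $\min_{i\in B}X_i$ is at least the $(n-k+1)$th largest, i.e.\ at most the $k$th smallest in the reversed order). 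Subtracting the analogous sum over size $n-k$ isolates the event that the worst member of $B$ sits in exactly the rank corresponding to the $k$th order statistic, which is precisely the event $X_{k:n}=\min_{i\in B}X_i$. The cleanest way to make this rigorous is to pass to the partition of the sample space into the $n!$ strict orderings $X_{\sigma(1)}<\cdots<X_{\sigma(n)}$ (legitimate since $F$ has no ties) and check that, conditional on each ordering $\sigma$, the telescoping difference of indicator sums equals $1$ exactly when $\min_{i\in B}X_i=X_{k:n}$.

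The main obstacle I anticipate is bookkeeping the rank conventions correctly: $q(A)$ is phrased in terms of the \emph{best} (largest) $|A|$ components, whereas $X_{k:n}$ is the $k$th \emph{smallest}, so I must track the complementation $|A|=n-k+1 \leftrightarrow$ "top $n-k+1$" $\leftrightarrow$ "$\min$ of $B$ is the $k$th smallest" without an off-by-one error. A convenient sanity check is to verify the boundary cases $B=\varnothing$ (where $m_\phi(\varnothing)=\phi(\varnothing)=0$ removes the ill-defined $\min$) and $|B|=1$, and to confirm that summing the claimed probabilities $\Pr(X_{k:n}=\min_{i\in B}X_i)$ over all $B$ against the M\"obius coefficients recovers Boland's i.i.d.\ formula (\ref{eq:asad678}) in the exchangeable case. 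Once the identification of the bracketed term is established, the proposition follows immediately by substituting it back into the collected double sum.
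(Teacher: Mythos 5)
Your proposal is correct and follows essentially the same route as the paper: expand $\phi$ via the inverse M\"obius transform in formula (\ref{eq:asghjad678}), interchange the sums, and identify $\sum_{A\supseteq B,\,|A|=m}q(A)$ as the probability that all components of $B$ are among the best $m$, so that the telescoping difference for $m=n-k+1$ and $m=n-k$ yields $\Pr(X_{k:n}=\min_{i\in B}X_i)$. The rank bookkeeping you flag is handled correctly, and your extra remarks (conditioning on the $n!$ orderings, the $B=\varnothing$ case) only make explicit what the paper leaves implicit.
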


\begin{proof}
Using the inverse M\"obius transform, for every $k\in [n]$ we have
$$
\sum_{|B|=k}q(B)\,\phi(B) ~=~ \sum_{|B|=k}q(B)\,\sum_{A\subseteq B}m_{\phi}(A) ~=~ \sum_{A\subseteq [n]}m_{\phi}(A)\,\sum_{\textstyle{B\supseteq A\atop |B|=k}} q(B)\, ,
$$
where the inner sum is the probability $\Pr(X_{n-k:n}<\min_{i\in A}X_i)$ that the components in $A$ be among the best $k$ components. We then conclude by (\ref{eq:asghjad678}).
\end{proof}

\begin{remark}
The proof of Proposition~\ref{prop:fs87f} shows that
$$
\Pr\Big(X_{k:n}=\min_{i\in A}X_i\Big) ~=~ \sum_{\textstyle{B\supseteq A\atop |B|=n-k+1}} q(B)-\sum_{\textstyle{B\supseteq A\atop |B|=n-k}} q(B)
$$
is exactly the $k$th coordinate $p_k$ of the signature of the semicoherent system obtained from the current system by transforming the structure function into $\phi(\bfx)=\prod_{i\in A}x_i$.\footnote{This fact also follows immediately from the identity $p_k=\Pr(T=X_{k:n})$ since this modified system has lifetime $T=\min_{i\in A}X_i\overline{\overline{\overline{\overline{}}}}$.} It also shows that the \emph{tail signature} \cite{MarMat11}, defined by $\Pr(T>X_{k:n})=\sum_{i=k+1}^np_i$, has the M\"obius representation
$$
\Pr(T>X_{k:n})~=~\sum_{|A|=n-k}q(A)\,\phi(A) ~=~ \sum_{A\subseteq [n]} m_{\phi}(A)\,\Pr\Big(X_{k:n}<\min_{i\in A}X_i\Big).
$$
\end{remark}

The next corollary, which follows immediately from Proposition~\ref{prop:s7dff5dssfd} and Theorem~\ref{thm:s7df5dssfd}, gives a sufficient
condition on $F$ for the equality $\mathbf{I}_{\mathrm{BP}}=\mathbf{b}$ to hold regardless of the structure function considered.

\begin{corollary}\label{cor:ad7s6}
If the variables $X_1,\ldots,X_n$ are exchangeable, then $\mathbf{I}_{\mathrm{BP}}=\mathbf{b}$.
\end{corollary}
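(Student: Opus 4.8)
The plan is to combine the two preceding results directly, with essentially no computation. Fix $j\in[n]$. Theorem~\ref{thm:s7df5dssfd} expresses the index as the weighted average
$$
I_{\mathrm{BP}}^{(j)} ~=~ \sum_{A\subseteq [n]\setminus\{j\}} q_j(A)\,\Delta_j\phi(A),
$$
whose weights $q_j(A)$ depend only on $F$. The key observation is that exchangeability collapses these weights to purely combinatorial quantities.

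First I would invoke Proposition~\ref{prop:s7dff5dssfd}, which asserts that under exchangeability $q_j(A)=\frac{1}{n{n-1\choose |A|}}$ for every $A\subseteq [n]\setminus\{j\}$, so that $q_j(A)$ depends on $A$ only through its cardinality. Substituting this into the displayed formula gives
$$
I_{\mathrm{BP}}^{(j)} ~=~ \sum_{A\subseteq [n]\setminus\{j\}} \frac{1}{n{n-1\choose |A|}}\,\Delta_j\phi(A).
$$
I would then simply recognize the right-hand side as the definition of $b_j$ in (\ref{eq:asd465d}). Since this identity holds for every $j\in[n]$, we conclude $\mathbf{I}_{\mathrm{BP}}=\mathbf{b}$.

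The honest remark is that there is no real obstacle once Theorem~\ref{thm:s7df5dssfd} and Proposition~\ref{prop:s7dff5dssfd} are available: the corollary is merely a matter of matching coefficients. What is worth emphasizing is that exchangeability is precisely the hypothesis that makes $q_j(A)$ depend only on $|A|$, and this combinatorial form is exactly that of the Shapley-Shubik weights appearing in $b_j$. Thus all the substance resides in the two quoted results rather than in the corollary itself.
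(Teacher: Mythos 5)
Your proposal is correct and is exactly the argument the paper intends: the corollary is stated as an immediate consequence of Proposition~\ref{prop:s7dff5dssfd} and Theorem~\ref{thm:s7df5dssfd}, obtained by substituting the exchangeable value of $q_j(A)$ into (\ref{eq:78s6ss7a}) and matching it with (\ref{eq:asd465d}). Nothing further is needed.
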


We now give necessary and sufficient conditions on $F$ for the equality $\mathbf{I}_{\mathrm{BP}}=\mathbf{b}$ to hold for every structure function.

A system is said to be \emph{coherent} if it is semicoherent and its structure function $\phi$ has only \emph{essential} variables, i.e., for
every $j\in [n]$, there exists $\bfx\in\{0,1\}^n$ such that $\phi(\bfx)|_{x_j=0}\neq\phi(\bfx)|_{x_j=1}$.\footnote{In other words, every
component of the system is relevant.} Let $\Phi_n$ (resp.\ $\Phi'_n$) denote the family of $n$-variable structure functions corresponding to coherent (resp.\ semicoherent) systems.

\begin{proposition}\label{prop:8sadf76}
The equality $\mathbf{I}_{\mathrm{BP}}=\mathbf{b}$ holds for every $\phi\in\Phi_n$ (or equivalently, for every $\phi\in\Phi'_n$) if and only if (\ref{eq:qkaa}) holds for every $j\in
[n]$ and every $A\subseteq [n]\setminus\{j\}$.
\end{proposition}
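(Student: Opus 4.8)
The plan is to prove the equivalence of three statements: (a) the equality holds for every $\phi\in\Phi_n$; (b) the equality holds for every $\phi\in\Phi'_n$; and (c) condition (\ref{eq:qkaa}) holds for all $j$ and all $A\subseteq[n]\setminus\{j\}$. The backbone is the observation that, by Theorem~\ref{thm:s7df5dssfd} and (\ref{eq:asd465d}), for every structure function $\phi$ and every $j\in[n]$,
\[
I_{\mathrm{BP}}^{(j)}-b_j ~=~ \sum_{A\subseteq[n]\setminus\{j\}}\Big(q_j(A)-\frac{1}{n\binom{n-1}{|A|}}\Big)\,\Delta_j\phi(A),
\]
so everything reduces to controlling the coefficients $c_j(A):=q_j(A)-\frac{1}{n\binom{n-1}{|A|}}$ through a well-chosen family of test functions.

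The two easy implications come first. The implication (c)$\Rightarrow$(b) is immediate: if (\ref{eq:qkaa}) holds then every $c_j(A)$ vanishes, so the displayed difference is identically zero for every semicoherent $\phi$. And (b)$\Rightarrow$(a) is trivial since $\Phi_n\subseteq\Phi'_n$. It thus remains to establish (a)$\Rightarrow$(c), which is the heart of the matter.

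For (a)$\Rightarrow$(c) I would fix $j\in[n]$ and $A_0\subseteq[n]\setminus\{j\}$ and exhibit a \emph{single} coherent structure function that isolates the coefficient $c_j(A_0)$, i.e., one whose $j$th discrete derivative is the indicator of $A_0$. The natural candidate is the function $\phi_{A_0}$ defined by $\phi_{A_0}(\bfx)=1$ if and only if the set of components in state $1$ strictly contains $A_0$; its minimal path sets are the sets $A_0\cup\{i\}$ with $i\notin A_0$. A direct computation of $\phi_{A_0}(A\cup\{j\})-\phi_{A_0}(A)$ for $A\subseteq[n]\setminus\{j\}$ should give $\Delta_j\phi_{A_0}(A)=1$ exactly when $A=A_0$ and $0$ otherwise. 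Substituting $\phi_{A_0}$ into the displayed identity and invoking (a) then forces $c_j(A_0)=0$; letting $A_0$ and $j$ range over all admissible choices yields (\ref{eq:qkaa}).

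The main obstacle is verifying that each $\phi_{A_0}$ is genuinely \emph{coherent}, not merely semicoherent, since hypothesis (a) only grants the equality for coherent systems. The boundary conditions $\phi_{A_0}(\varnothing)=0$ and $\phi_{A_0}([n])=1$ are clear, and monotonicity follows at once from the ``strictly contains'' description; the delicate point is the relevance of every variable. For a component $i\in A_0$ one compares the states at $[n]\setminus\{i\}$ and $[n]$, while for $i\notin A_0$ one compares the states at $A_0$ and $A_0\cup\{i\}$, and in each case the value of $\phi_{A_0}$ flips, so all variables are essential. I would also treat the extreme cases $A_0=\varnothing$ (a parallel system) and $A_0=[n]\setminus\{j\}$ (a series system) separately to be safe, since the derivative computation degenerates there even though the conclusion persists.
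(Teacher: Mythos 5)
Your proof is correct, but it takes a genuinely different route from the paper's. The paper writes the difference $I_{\mathrm{BP}}^{(j)}-b_j$ as a linear functional $\sum_{A\subseteq[n]}\lambda(A)\,\phi(A)$ of the structure function and then invokes the result of \cite{MarMatWal} that such a functional vanishes on all of $\Phi_n$ (or $\Phi'_n$) if and only if $\lambda(A)=0$ for all $A\neq\varnothing$; this forces the paper to restrict to $n\geqslant 3$ and dispose of $n=1,2$ by inspection. You instead build, for each pair $(j,A_0)$, an explicit coherent test function $\phi_{A_0}$ (value $1$ iff the set of working components strictly contains $A_0$) whose $j$th discrete derivative is the indicator of $A_0$, thereby isolating the coefficient $q_j(A_0)-1/\bigl(n\binom{n-1}{|A_0|}\bigr)$ directly. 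Your verification checks out: $\Delta_j\phi_{A_0}(A)=\phi_{A_0}(A\cup\{j\})-\phi_{A_0}(A)$ equals $1$ exactly when $A=A_0$ (using $j\notin A_0$), monotonicity and the boundary conditions are clear, and every variable is essential by the two comparisons you indicate, so $\phi_{A_0}\in\Phi_n$ and hypothesis (a) suffices; the extreme cases $A_0=\varnothing$ and $A_0=[n]\setminus\{j\}$ in fact pose no real degeneracy (they are the parallel and series systems). What your approach buys is self-containedness (no appeal to the external lemma) and uniformity in $n$, with no case distinction for small $n$; what the paper's approach buys is brevity and the bonus observation that the same argument pattern handles the signature analogue stated just afterwards.
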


\begin{proof}
We can assume that $n\geqslant 3$ (the cases $n=1$ and $n=2$ can be checked easily). Using (\ref{eq:78s6ss7a}) and (\ref{eq:asd465d}), we see
that the identity $\mathbf{I}_{\mathrm{BP}}=\mathbf{b}$ can be written as
$$
\sum_{A\subseteq [n]}(-1)^{|\{j\}\setminus A|}\, q_j(A\setminus\{j\})\, \phi(A) ~=~ \sum_{A\subseteq [n]}(-1)^{|\{j\}\setminus A|}\,
\frac{1}{n{n-1\choose |A\setminus\{j\}|}}\, \phi(A).
$$
It was shown in \cite{MarMatWal} that for any function $\lambda\colon 2^{[n]}\to\R$ we have
$$
\sum_{A\subseteq [n]}\lambda(A)\,\phi(A) ~=~ 0,\qquad\mbox{for every $\phi\in\Phi_n$ (or every $\phi\in\Phi'_n$)}
$$
if and only if $\lambda(A)=0$ for all $A\neq\varnothing$. Therefore we have $\mathbf{I}_{\mathrm{BP}}=\mathbf{b}$ for every $\phi\in\Phi_n$ (or every $\phi\in\Phi'_n$) if and only if
$q_j(A\setminus\{j\})=1/(n{n-1\choose |A\setminus\{j\}|})$ for every $j\in [n]$ and every $A\neq\varnothing$. This completes the proof.
\end{proof}

We observe that $q_j(A)$ has the form (\ref{eq:qkaa}) for every $j\in [n]$ and every $A\subseteq [n]\setminus\{j\}$ if and only if
the map $(j,A)\mapsto q_j(A)$ is symmetric in the sense that
$$
q_{\sigma(j)}(\sigma(A)) ~=~ q_j(A)
$$
for every $j\in [n]$, every $A\subseteq [n]\setminus\{j\}$, and every permutation $\sigma$ on $[n]$. Indeed, by (\ref{eq:s7df5}) and
(\ref{eq:7sdf5}), for any $k\in [n-1]$ we have
$$
\sum_{|A|=k}\,\sum_{j\notin A}q_j(A) ~=~ \sum_{|A|=k}q(A) ~=~ 1\, .
$$
The identity (\ref{eq:qkaa}) then follows from the symmetry of the map $(j,A)\mapsto q_j(A)$.\footnote{We note that this result provides an
alternative proof of Proposition~\ref{prop:s7dff5dssfd}.}

The ``signature'' version of Proposition~\ref{prop:8sadf76} can be stated as follows (see \cite{MarMatWal}). For $n\geqslant 3$ (resp.\
$n\geqslant 2$), the equality $\mathbf{p}=\mathbf{s}$ holds for every $\phi\in\Phi_n$ (resp.\ every $\phi\in\Phi'_n$) if and only if
$q(A)=1/{n\choose |A|}$ for every $A\subseteq [n]$ (i.e., $q$ is a symmetric function).

\begin{remark}\label{rem:7sfds}
It can immediately be seen that the function $q$ is symmetric (i.e., $q(A)=1/{n\choose |A|}$ for every $A\subseteq [n]$) as soon as condition (\ref{eq:qkaa}) holds for every $j\in [n]$ and every $A\subseteq [n]\setminus\{j\}$. Let us now show that the converse statement does not hold in general. Suppose $n=3$ and set $p_{ijk}=\Pr(X_i<X_j<X_k)$. Since $F$ has no ties, we must have \begin{equation}\label{eq:ergh61}
p_{123}+p_{132}+p_{213}+p_{231}+p_{312}+p_{321} ~=~ 1\, .
\end{equation}
It is clear that the function $q$ is symmetric if and only if
\begin{eqnarray}
&& p_{231}+p_{321} ~=~ p_{132}+p_{312} ~=~ p_{123}+p_{213} ~=~ 1/3\, ,\label{eq:ergh62}\\
&& p_{312}+p_{321} ~=~ p_{213}+p_{231} ~=~ p_{123}+p_{132} ~=~ 1/3\, .\label{eq:ergh63}
\end{eqnarray}
Combining Eq.~(\ref{eq:ergh61})--(\ref{eq:ergh63}) with the fact that $0\leqslant p_{ijk}\leqslant 1$, we see that $q$ is symmetric if and only if there exists $\lambda\in [0,1]$ such that
$$
(p_{123},p_{132},p_{213},p_{231},p_{312},p_{321}) ~=~ (\lambda,1-\lambda,1-\lambda,\lambda,\lambda,1-\lambda)/3\, .
$$
On the other hand, we have $q_j(A)=1/(3{2\choose |A|})$ for every $j\in [3]$ and every $A\subseteq [3]\setminus\{j\}$ if and only if $p_{ijk}=1/6$ for every permutation $(i,j,k)$ of $(1,2,3)$.
\end{remark}

\section{Continuous and independent continuous lifetimes}

We now assume that the component lifetimes are absolutely continuous with p.d.f.\ $f$. This assumption enables us to derive explicit integral formulas for $q_j(A)$ and $r_j(A)$.

\begin{proposition}\label{prop:a78sd6}
For absolutely continuous lifetimes, we have
\begin{equation}\label{eq:7as6fd}
q_j(A) ~=~ \int_0^{\infty}\int_{\left]0,t_j\right[^{[n]\setminus(A\cup\{j\})}}\int_{\left]t_j,\infty\right[^A}f(\mathbf{t})\, d\mathbf{t}_A\, d\mathbf{t}_{[n]\setminus(A\cup\{j\})}\, dt_j
\end{equation}
and
\begin{equation}\label{eq:7as6fd2}
r_j(A) ~=~ \int_0^{\infty}\int_{\left]0,\infty\right[^{[n]\setminus(A\cup\{j\})}}\int_{\left]t_j,\infty\right[^A}f(\mathbf{t})\, d\mathbf{t}_A\, d\mathbf{t}_{[n]\setminus(A\cup\{j\})}\, dt_j
\end{equation}
for every $j\in [n]$ and every $A\subseteq [n]\setminus\{j\}$.
\end{proposition}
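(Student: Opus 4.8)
The plan is to express each probability directly as an integral of the joint density $f$ over the region of $]0,\infty[^n$ cut out by the defining inequalities, and then to organize that multiple integral into the iterated form displayed in the statement via Fubini's theorem.

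First I would treat $q_j(A)$. By definition $q_j(A)$ is the probability of the event $\{\max_{i\notin A\cup\{j\}}X_i<X_j<\min_{i\in A}X_i\}$. Since the lifetimes are absolutely continuous with density $f$, this probability equals the integral of $f$ over the corresponding Borel subset of $]0,\infty[^n$. I would decode the two chains of inequalities coordinatewise: the condition $X_j<\min_{i\in A}X_i$ forces $t_i\in\left]t_j,\infty\right[$ for each $i\in A$, while the condition $\max_{i\notin A\cup\{j\}}X_i<X_j$ forces $t_i\in\left]0,t_j\right[$ for each $i\notin A\cup\{j\}$; the coordinate $t_j$ itself ranges freely over $\left]0,\infty\right[$. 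Integrating $f$ over this product region and applying Fubini's theorem (legitimate because $f\geqslant 0$ is integrable, being a density) to peel off the variables in the order $\mathbf{t}_A$ innermost, then $\mathbf{t}_{[n]\setminus(A\cup\{j\})}$, then $t_j$ outermost, yields exactly (\ref{eq:7as6fd}).

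The argument for $r_j(A)$ is identical except that the event $\{X_j<\min_{i\in A}X_i\}$ imposes no constraint on the components outside $A\cup\{j\}$. Hence those coordinates are integrated over all of $\left]0,\infty\right[$ rather than over $\left]0,t_j\right[$, which is precisely the difference between (\ref{eq:7as6fd2}) and (\ref{eq:7as6fd}). Alternatively, $r_j(A)$ could be recovered from $q_j(A)$ by summing (\ref{eq:7as6fd}) over all supersets $B\supseteq A$ via (\ref{eq:6erwe}) and collapsing the resulting union of slabs, but the direct computation is shorter.

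There is no substantial obstacle here: the only point requiring care is the bookkeeping of which coordinates are bounded above by $t_j$ and which are left unconstrained. I would also note in passing that the no-ties property guaranteed by absolute continuity makes each boundary hyperplane $\{t_i=t_j\}$ a null set, so that replacing the open integration intervals by their closures leaves the values of the integrals unchanged.
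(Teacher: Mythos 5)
Your proposal is correct and follows essentially the same route as the paper: both identify $q_j(A)$ (resp.\ $r_j(A)$) as the integral of the joint density over the region of $\left]0,\infty\right[^n$ cut out by the defining inequalities and then write that integral in the stated iterated form. The extra remarks on Fubini and on the boundary hyperplanes being null sets are harmless elaborations of what the paper leaves implicit.
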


\begin{proof}
By definition we have $q_j(A)=\Pr(E)$, where $E$ is the event
$$
\max_{i\notin A\cup\{j\}}X_i<X_j<\min_{i\in A}X_i\, ,
$$
which can be described by the set
$$
\big\{(t_1,\ldots,t_n)\in\left]0,\infty\right[^n : t_i<t_j~\forall i\notin A\cup\{j\},\, t_j<t_i~\forall i\in A\big\}.
$$
Formula (\ref{eq:7as6fd}) then follows by integrating the p.d.f.\ over this event. Formula (\ref{eq:7as6fd2}) can be established similarly.
\end{proof}

We now consider the special case of independent and absolutely continuous lifetimes $X_1,\ldots,X_n$, each $X_i$ having p.d.f.\ $f_i$ and
c.d.f.\ $F_i$, with $F_i(0)=0$. The following immediate result shows how formulas (\ref{eq:7as6fd}) and (\ref{eq:7as6fd2}) can be simplified. Similar formulas for $q(A)$ can be found in \cite{MarMat11}.

\begin{proposition}\label{lemma:6d5f67}
For independent and absolutely continuous lifetimes, we have
\begin{equation}\label{eq:sd5672}
q_j(A) ~=~ \int_0^{\infty}f_j(t)\,\prod_{i\in A}\overline{F}_i(t)\prod_{i\notin A\cup\{j\}}F_i(t)\, dt
\end{equation}
and
\begin{equation}\label{eq:5sdfs}
r_j(A) ~=~ \int_0^{\infty}f_j(t)\,\prod_{i\in A}\overline{F}_i(t)\, dt
\end{equation}
for every $j\in [n]$ and every $A\subseteq [n]\setminus\{j\}$, where $\overline{F}_i(t)=1-F_i(t)$.
\end{proposition}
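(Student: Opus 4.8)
The plan is to specialize the general integral formulas \eqref{eq:7as6fd} and \eqref{eq:7as6fd2} from Proposition~\ref{prop:a78sd6} to the independent case, where the joint p.d.f.\ factorizes as $f(\mathbf{t})=\prod_{i=1}^n f_i(t_i)$. Once this factorization is plugged in, each multiple integral over the product domain decouples into a product of one-dimensional integrals, one for each variable, and the whole computation reduces to recognizing each single-variable integral as either a c.d.f.\ or a survival function evaluated at $t_j=t$.

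First I would establish \eqref{eq:sd5672}. Starting from \eqref{eq:7as6fd} and substituting $f(\mathbf{t})=f_j(t_j)\prod_{i\neq j}f_i(t_i)$, the integrand factors completely, so by Fubini the inner integrals separate variable by variable. For each $i\in A$ the factor is $\int_{t_j}^{\infty}f_i(s)\dd s=\overline{F}_i(t_j)$, and for each $i\notin A\cup\{j\}$ the factor is $\int_0^{t_j}f_i(s)\dd s=F_i(t_j)-F_i(0)=F_i(t_j)$, using the hypothesis $F_i(0)=0$. Writing $t$ for the outer integration variable $t_j$ then yields exactly \eqref{eq:sd5672}.

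Next I would establish \eqref{eq:5sdfs} by the same procedure applied to \eqref{eq:7as6fd2}. The only difference is that the variables indexed by $[n]\setminus(A\cup\{j\})$ now range over $\left]0,\infty\right[$ rather than $\left]0,t_j\right[$, so for each such $i$ the separated factor becomes $\int_0^{\infty}f_i(s)\dd s=1$, leaving only the product $\prod_{i\in A}\overline{F}_i(t)$ together with $f_j(t)$. This matches \eqref{eq:5sdfs} and is also consistent with the defining identity \eqref{eq:6erwe}, since summing \eqref{eq:sd5672} over all supersets $B\supseteq A$ telescopes the $F_i$ and $\overline{F}_i$ factors back to $1$ for each $i\notin A\cup\{j\}$.

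I do not anticipate a genuine obstacle here, which is why the paper labels the result ``immediate''; the entire content is the interchange of integration order justified by Fubini (applicable since $f$ is a nonnegative integrable density) together with the normalization $F_i(0)=0$ that kills the lower-limit boundary term. The only point requiring a modicum of care is bookkeeping the index sets correctly so that each of the three groups $A$, $[n]\setminus(A\cup\{j\})$, and $\{j\}$ contributes the right factor, but this is routine rather than difficult.
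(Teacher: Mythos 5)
Your argument is correct and is precisely the ``immediate'' specialization the paper intends: the paper offers no written proof for this proposition, deferring to the factorization $f(\mathbf{t})=\prod_i f_i(t_i)$ applied to \eqref{eq:7as6fd} and \eqref{eq:7as6fd2}, which is exactly what you carry out. Your bookkeeping of the three index groups, the use of $F_i(0)=0$, and the Fubini justification are all sound, so nothing is missing.
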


Using (\ref{eq:5sdfs}) and then (\ref{eq:6erwe2}) leads immediately to the following corollary.

\begin{corollary}\label{cor:vx6cx6c5}
For independent Weibull lifetimes, with $F_i(t)=1-e^{-(\lambda_i t)^{\alpha}}$, we have
\begin{equation}\label{eq:ycx7v6a}
r_j(A) ~=~ \frac{\lambda_{\alpha}(\{j\})}{\lambda_{\alpha}(A\cup\{j\})}
\end{equation}
and
\begin{equation}\label{eq:ycx7v6}
q_j(A)~=~\sum_{\textstyle{B\subseteq [n]\setminus\{j\}\atop B\supseteq A}}(-1)^{|B|-|A|}\,\frac{\lambda_{\alpha}(\{j\})}{\lambda_{\alpha}(B\cup\{j\})}
\end{equation}
for every $j\in [n]$ and every $A\subseteq [n]\setminus\{j\}$, where $\lambda_{\alpha}(A)=\sum_{i\in A}\lambda_i^{\alpha}$.
\end{corollary}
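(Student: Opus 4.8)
The plan is to prove Corollary~\ref{cor:vx6cx6c5} by directly evaluating the integral formula (\ref{eq:5sdfs}) for $r_j(A)$ in the Weibull case and then obtaining (\ref{eq:ycx7v6}) by a purely formal application of the M\"obius inversion (\ref{eq:6erwe2}). The corollary is stated as following ``immediately'' from these two ingredients, so the real content is the short computation of the integral; the second formula requires no further probabilistic reasoning once the first is in hand.

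First I would substitute the Weibull survival function $\overline{F}_i(t)=e^{-(\lambda_i t)^{\alpha}}=e^{-\lambda_i^{\alpha}t^{\alpha}}$ and the density $f_j(t)=\alpha\lambda_j^{\alpha}t^{\alpha-1}e^{-\lambda_j^{\alpha}t^{\alpha}}$ into (\ref{eq:5sdfs}). Collecting the exponentials gives
\[
r_j(A) ~=~ \int_0^{\infty}\alpha\lambda_j^{\alpha}\,t^{\alpha-1}\,\exp\Big(-\big(\lambda_j^{\alpha}+\textstyle\sum_{i\in A}\lambda_i^{\alpha}\big)\,t^{\alpha}\Big)\,dt
~=~ \int_0^{\infty}\alpha\lambda_{\alpha}(\{j\})\,t^{\alpha-1}\,e^{-\lambda_{\alpha}(A\cup\{j\})\,t^{\alpha}}\,dt,
\]
where I have used the additivity $\lambda_{\alpha}(A\cup\{j\})=\lambda_j^{\alpha}+\sum_{i\in A}\lambda_i^{\alpha}$ (valid since $j\notin A$). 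The change of variable $u=t^{\alpha}$, $du=\alpha t^{\alpha-1}\,dt$, turns this into $\lambda_{\alpha}(\{j\})\int_0^{\infty}e^{-\lambda_{\alpha}(A\cup\{j\})\,u}\,du=\lambda_{\alpha}(\{j\})/\lambda_{\alpha}(A\cup\{j\})$, which is exactly (\ref{eq:ycx7v6a}). Note the shape parameter $\alpha$ disappears entirely after this substitution, as the formula (\ref{eq:ycx7v6a}) reflects.

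Having established (\ref{eq:ycx7v6a}), I would obtain (\ref{eq:ycx7v6}) by inserting the closed form of $r_j$ into the inversion formula (\ref{eq:6erwe2}), namely $q_j(A)=\sum_{B\supseteq A}(-1)^{|B|-|A|}r_j(B)$ with $B$ ranging over subsets of $[n]\setminus\{j\}$; this substitution yields (\ref{eq:ycx7v6}) verbatim with no additional work. I do not anticipate a genuine obstacle here: the only point requiring a little care is the convergence and legitimacy of the change of variable at $t=0$ and $t=\infty$, but since $\lambda_{\alpha}(A\cup\{j\})\geqslant\lambda_j^{\alpha}>0$ the integrand decays exponentially and the integral converges absolutely, so the computation is fully rigorous. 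The main thing to verify is simply that $j\notin A$ so that the index $j$ contributes to $\lambda_{\alpha}(A\cup\{j\})$ exactly once, which is guaranteed by the domain $A\subseteq[n]\setminus\{j\}$.
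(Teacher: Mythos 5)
Your proposal is correct and follows exactly the route the paper indicates (the paper gives no explicit proof, stating only that the corollary follows from (\ref{eq:5sdfs}) and (\ref{eq:6erwe2})): substituting the Weibull density and survival functions into (\ref{eq:5sdfs}), changing variables $u=t^{\alpha}$ to get (\ref{eq:ycx7v6a}), and then applying the inversion (\ref{eq:6erwe2}) to obtain (\ref{eq:ycx7v6}). The computation is accurate and complete.
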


We observe that, under the assumptions of Corollary~\ref{cor:vx6cx6c5}, by (\ref{eq:ycx7v6a}) the ratio
$$
\frac{\lambda_{\alpha}(\{j\})}{\lambda_{\alpha}([n])} ~=~ r_j([n]\setminus\{j\}) ~=~ q_j([n]\setminus\{j\})
$$
is exactly the probability that $X_j$ is the shortest lifetime.

\section{A symmetry index for systems}\label{sec:a6d5a}

A natural concept of symmetry for systems can be defined as follows. We say that a semicoherent system $S=(n,\phi,F)$ is \emph{symmetric} if it
has a uniform Barlow-Proschan index, i.e., $\mathbf{I}_{\mathrm{BP}}=(1/n,\ldots,1/n)$.

\begin{example}
For a system made up of $n$ serially connected components, we have $I_{\mathrm{BP}}^{(j)}=q_j([n]\setminus\{j\})=q([n]\setminus\{j\})$ for every
$j\in [n]$. The system is then symmetric if and only if $q([n]\setminus\{j\})$ is independent of $j$.
\end{example}

Since a system is rarely symmetric in the non-i.i.d.\ case, it is natural to define an index measuring a ``symmetry degree'' of the system.

Recall from probability theory that the uniformity of a probability distribution $\mathbf{w}=(w_1,\ldots,w_n)$ over $[n]$ can be measured
through the concept of \emph{normalized Shannon entropy}
$$
H(\mathbf{w}) ~=~ -\frac{1}{\ln n}\,\sum_{i=1}^nw_i\,\ln (w_i)\, ,
$$
with the convention that $0\ln 0=0$. It is well known that $H(\mathbf{w})$ is maximum $(H(\mathbf{w})=1)$ if and only if $\mathbf{w}$ is the
uniform distribution $\mathbf{w}^*=(1/n,\ldots,1/n)$ and minimum $(H(\mathbf{w})=0)$ if and only if $w_j=1$ for some $j\in [n]$ and $w_i=0$ for
all $i\neq j$ (Dirac measure). Moreover, for any probability distribution $\mathbf{w}\neq\mathbf{w}^*$, the expression
$H(\mathbf{w}_{\lambda})$, where $\mathbf{w}_{\lambda}=\mathbf{w}+\lambda\,(\mathbf{w}^*-\mathbf{w})$, strictly increases as the parameter
$\lambda$ increases from $0$ to $1$. 
Thus, the number $H(\mathbf{w})$, which lies in the interval $[0,1]$, measures a uniformity (evenness) degree of the probability distribution
$\mathbf{w}$.

On the basis of these observations we define a symmetry index as follows.

\begin{definition}
The \emph{symmetry index} for a semicoherent system $S=(n,\phi,F)$ is the number $H(\mathbf{I}_{\mathrm{BP}})$, that is, the normalized Shannon entropy of $\mathbf{I}_{\mathrm{BP}}$.
\end{definition}

It seems intuitive---at least in the exchangeable case---that the lower $H(\mathbf{I}_{\mathrm{BP}})$ (i.e., the more concentrated the distribution $\mathbf{I}_{\mathrm{BP}}$) the more the system components play an asymmetric role. Similarly, the lower $H(\mathbf{p})$ the more the components play a symmetric role. This observation is confirmed by the next two propositions, where we provide conditions under which the entropies $H(\mathbf{I}_{\mathrm{BP}})$ and $H(\mathbf{p})$ reach their extreme values.


\begin{proposition}\label{prop:p1}
Let $S=(n,\phi,F)$ be a semicoherent system.
\begin{enumerate}
\item[$(i)$] If the functions $q_i$ $(i=1,\ldots,n)$ are strictly positive, then $H(\mathbf{I}_{\mathrm{BP}})=0$ if and only if
$\phi(\mathbf{x}) = x_j$ for some $j\in [n]$ (i.e., exactly one component of $S$ is relevant).

\item[$(ii)$] If the function $q$ is strictly positive, then $H(\mathbf{p})=0$ if and only if $\phi(\mathbf{x}) = x_{k:n}$ for some
$k\in [n]$ (i.e., $S$ is a $k$-out-of-$n$ system).\end{enumerate}
\end{proposition}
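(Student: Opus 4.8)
The plan is to handle both parts in parallel, since they rest on the same two ingredients: the fact (recalled just before the definition of the symmetry index) that the normalized entropy $H(\mathbf{w})$ of a probability distribution vanishes precisely when $\mathbf{w}$ is a Dirac measure, and the convex‑combination representations of $\mathbf{I}_{\mathrm{BP}}$ and $\mathbf{p}$ established earlier. Both $\mathbf{I}_{\mathrm{BP}}$ and $\mathbf{p}$ are genuine probability distributions (the former is efficient, $\sum_j I_{\mathrm{BP}}^{(j)}=1$, the latter satisfies $\sum_k p_k=1$), so $H=0$ forces exactly one coordinate to equal $1$ and all the others to vanish. The strategy is then to read off, from strict positivity of the weights, that such an extreme coordinate can occur only for a very rigid structure function.

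For part $(i)$ I would start from $(\ref{eq:78s6ss7a})$, which expresses $I_{\mathrm{BP}}^{(j)}=\sum_{A}q_j(A)\,\Delta_j\phi(A)$ as a weighted mean of the discrete derivatives, the weights $q_j(A)$ summing to $1$ by $(\ref{eq:87sdaf6})$. Since $\phi$ is $\{0,1\}$‑valued and nondecreasing, each $\Delta_j\phi(A)$ lies in $\{0,1\}$. The hypothesis $q_j>0$ yields the key dichotomy: $I_{\mathrm{BP}}^{(j)}=1$ forces $\Delta_j\phi(A)=1$ for every $A\subseteq[n]\setminus\{j\}$, while $I_{\mathrm{BP}}^{(j)}=0$ forces $\Delta_j\phi(A)=0$ for every such $A$. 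Assuming $H(\mathbf{I}_{\mathrm{BP}})=0$, I pick the index $j$ with $I_{\mathrm{BP}}^{(j)}=1$; then $\phi(A\cup\{j\})=1$ and $\phi(A)=0$ for all $A\subseteq[n]\setminus\{j\}$, which is exactly the identity $\phi(\mathbf{x})=x_j$. The converse is immediate: if $\phi(\mathbf{x})=x_j$ then $\Delta_j\phi\equiv1$ and $\Delta_i\phi\equiv0$ for $i\neq j$, so $\mathbf{I}_{\mathrm{BP}}$ is the Dirac measure at $j$ and $H=0$.

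For part $(ii)$ I would pass to the tail signature $\overline{P}_k=\Pr(T>X_{k:n})=\sum_{|A|=n-k}q(A)\,\phi(A)$ recorded in the remark following Proposition~\ref{prop:fs87f}, with boundary values $\overline{P}_0=1$ and $\overline{P}_n=0$, so that $p_k=\overline{P}_{k-1}-\overline{P}_k$. Since $\overline{P}$ is nonincreasing, $\mathbf{p}$ is Dirac at $k$ exactly when $\overline{P}_{k-1}=1$ and $\overline{P}_k=0$. Now $\overline{P}_{k-1}=\sum_{|A|=n-k+1}q(A)\,\phi(A)$ is again a weighted mean of the $\{0,1\}$‑values $\phi(A)$ with weights summing to $1$ by $(\ref{eq:s7df5})$; using $q>0$, the equality $\overline{P}_{k-1}=1$ forces $\phi(A)=1$ for all $|A|=n-k+1$, and $\overline{P}_k=0$ forces $\phi(A)=0$ for all $|A|=n-k$. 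Monotonicity of $\phi$ then propagates these to $\phi(A)=1$ for all $|A|\geqslant n-k+1$ and $\phi(A)=0$ for all $|A|\leqslant n-k$, which is precisely the $k$‑out‑of‑$n$ structure $\phi(\mathbf{x})=x_{k:n}$. Conversely, $\phi(\mathbf{x})=x_{k:n}$ gives $\overline{P}_{k-1}=1$ and $\overline{P}_k=0$, hence $p_k=1$ and $H(\mathbf{p})=0$.

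The genuinely routine steps are the two ``a convex combination of $0$'s and $1$'s with strictly positive weights attains an endpoint only if all entries equal that endpoint'' observations. The only point requiring a little care is in part $(ii)$, where one must correctly track the indexing of the tail signature (including the boundary values $\overline{P}_0$ and $\overline{P}_n$) and then invoke the monotonicity of $\phi$ to lift the conclusions from the single cardinality levels $n-k$ and $n-k+1$ to all subsets, thereby recovering the global identity $\phi=x_{k:n}$. I expect this monotonicity propagation to be the main, though modest, obstacle.
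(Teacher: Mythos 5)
Your proposal is correct and follows essentially the same route as the paper: both parts reduce $H=0$ to a Dirac coordinate and then exploit that a convex combination of $\{0,1\}$-values with strictly positive weights (summing to $1$ by $(\ref{eq:87sdaf6})$, resp.\ $(\ref{eq:s7df5})$) hits an endpoint only if all values equal that endpoint. Your part $(ii)$ is phrased via the tail signature plus monotonicity of $\phi$, whereas the paper telescopes the level sums $\sum_{|A|=i}q(A)\phi(A)$ directly, but this is only a cosmetic variation of the same argument.
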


\begin{proof}
Let us prove $(i)$. Suppose first that $H(\mathbf{I}_{\mathrm{BP}})=0$. This means that $I_{\mathrm{BP}}^{(j)}=1$ for some
$j\in [n]$. Equivalently, by (\ref{eq:78s6ss7a}) we have
$$
\sum_{A\subseteq [n]\setminus\{i\}} q_i(A)\,\Delta_i\phi(A) ~=~
\begin{cases}
1, & \mbox{if $i=j$}\\
0, & \mbox{otherwise}.
\end{cases}
$$
By (\ref{eq:87sdaf6}) and due to the positivity of $q_i$, this means that $\Delta_i\phi(A)=1$ if and only if $i=j$. Equivalently,
$\phi(A)=1$ if and only if $j\in A$. Thus, $\phi(\mathbf{x}) = x_j$. The converse implication immediately follows from (\ref{eq:87sdaf6}) and (\ref{eq:78s6ss7a}).

Let us prove $(ii)$. Suppose first that $H(\mathbf{p})=0$. This means that $p_k=1$ for some $k\in [n]$. Equivalently, by
(\ref{eq:asghjad678}) we have,
$$
\sum_{|A|=i}q(A)\,\phi(A) ~=~
\begin{cases}
1, & \mbox{if $i\geqslant n-k+1$}\\
0, & \mbox{otherwise}.
\end{cases}
$$
By (\ref{eq:s7df5}) and due to the positivity of $q$, this condition means that $\phi(A)=1$ if and only if $|A|\geqslant n-k+1$. Thus,
$\phi(\mathbf{x}) = x_{k:n}$.  The converse implication immediately follows from (\ref{eq:asghjad678}) and (\ref{eq:s7df5}).
\end{proof}

\begin{proposition}\label{prop:p2}
Let $S=(n,\phi,F)$ be a semicoherent system.
\begin{enumerate}
\item[$(i)$] If the functions $q_j$ $(j=1,\ldots,n)$ have the form (\ref{eq:qkaa}) and $\phi(\mathbf{x}) = x_{k:n}$ for some
$k\in [n]$, then $H(\mathbf{I}_{\mathrm{BP}})=H(\mathbf{b})=1$.

\item[$(ii)$] If the function $q$ is symmetric and $\phi(\mathbf{x}) = x_j$ for some $j\in [n]$, then $H(\mathbf{p})=H(\mathbf{s})=1$.
\end{enumerate}
\end{proposition}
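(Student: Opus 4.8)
The plan is to reduce each statement to the assertion that the relevant \emph{structural} index is the uniform distribution $\mathbf{w}^*=(1/n,\ldots,1/n)$; the conclusion $H=1$ then follows at once from the characterization recalled just before the definition of the symmetry index, namely that $H(\mathbf{w})=1$ if and only if $\mathbf{w}=\mathbf{w}^*$.

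For part $(i)$, the hypothesis that the $q_j$ have the form (\ref{eq:qkaa}) is exactly the condition appearing in Proposition~\ref{prop:8sadf76}, so it forces $\mathbf{I}_{\mathrm{BP}}=\mathbf{b}$ for \emph{every} structure function, in particular for $\phi=x_{k:n}$; hence $H(\mathbf{I}_{\mathrm{BP}})=H(\mathbf{b})$ and it suffices to show $\mathbf{b}=\mathbf{w}^*$. Evaluating (\ref{eq:asd465d}) at the $k$-out-of-$n$ structure, where $\Delta_j\phi(A)=1$ precisely when $|A|=n-k$, I would obtain
$$
b_j ~=~ \sum_{\substack{A\subseteq[n]\setminus\{j\}\\ |A|=n-k}}\frac{1}{n\binom{n-1}{n-k}} ~=~ \binom{n-1}{n-k}\,\frac{1}{n\binom{n-1}{n-k}} ~=~ \frac1n
$$
for every $j$. (Alternatively, since $x_{k:n}$ is a symmetric structure function, the value $\mathbf{b}$ is invariant under all coordinate permutations, hence constant, and then equal to $\mathbf{w}^*$ by efficiency $\sum_j b_j=1$.)

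Part $(ii)$ runs in parallel. The hypothesis that $q$ is symmetric is the condition in the signature analog of Proposition~\ref{prop:8sadf76} stated afterward, so it yields $\mathbf{p}=\mathbf{s}$ for every $\phi\in\Phi'_n$, in particular for $\phi=x_j$; thus $H(\mathbf{p})=H(\mathbf{s})$ and it remains to show $\mathbf{s}=\mathbf{w}^*$. The quickest route is probabilistic: $\mathbf{s}$ is the structural signature, computed under i.i.d.\ (hence exchangeable) lifetimes, so the rank of $X_j$ among $X_1,\ldots,X_n$ is uniform on $[n]$; since $\phi=x_j$ forces $T=X_j$, we get $s_k=\Pr(T=X_{k:n})=\Pr(X_j=X_{k:n})=1/n$ for each $k$. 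Equivalently, one substitutes ``$\phi(A)=1$ exactly when $j\in A$'' into Boland's formula (\ref{eq:asad678}) and simplifies the two binomial sums to $(n-k+1)/n$ and $(n-k)/n$.

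No genuine obstacle arises: both assertions become immediate once the correct reduction result is invoked. The only points that require care are the choice of the semicoherent ($\Phi'_n$) version of the reduction in part $(ii)$---mandatory because $\phi=x_j$ has inessential variables and so lies in $\Phi'_n\setminus\Phi_n$---and the boundary indices $k=1$ and $k=n$ in the binomial evaluation of $\mathbf{s}$, where the empty and full sets must be treated separately; the rank argument sidesteps the latter entirely.
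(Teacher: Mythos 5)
Your proof is correct and follows essentially the same route as the paper's: invoke Proposition~\ref{prop:8sadf76} (resp.\ its signature analogue) to identify $\mathbf{I}_{\mathrm{BP}}$ with $\mathbf{b}$ (resp.\ $\mathbf{p}$ with $\mathbf{s}$), then evaluate the structural index at the special structure function via the same binomial count, obtaining $1/n$ in each coordinate. Your rank-uniformity shortcut for $s_k$ in part $(ii)$ and your remark about needing the $\Phi'_n$ version (since $\phi=x_j$ has inessential variables) are correct refinements but do not change the substance of the argument.
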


\begin{proof}
Let us prove $(i)$. Since $\phi(\mathbf{x})=x_{k:n}$ for some $k$, we have $\Delta_j\phi(A)=1$ if and only if $|A|=n-k$. It follows that
$$
I_{\mathrm{BP}}^{(j)}~=~{n-1\choose n-k}\,\frac{1}{n\,{n-1\choose n-k}}~=~\frac{1}{n}
$$
and hence $H(\mathbf{I}_{\mathrm{BP}})=1$. We also have $\mathbf{I}_{\mathrm{BP}}=\mathbf{b}$ by Proposition~\ref{prop:8sadf76}.

Let us prove $(ii)$. Since $\phi(\mathbf{x})=x_j$ for some $j$, we have $\phi(A)=1$ if and only if $j\in A$. It follows that
$$
p_k ~=~ \sum_{|A|=n-k+1}\frac{1}{{n\choose |A|}}\,\phi(A)-\sum_{|A|=n-k}\frac{1}{{n\choose |A|}}\,\phi(A)~=~\frac{{n-1\choose n-k}}{{n\choose
n-k+1}}-\frac{{n-1\choose n-k-1}}{{n\choose n-k}}~=~\frac{1}{n}
$$
for $k<n$ (and also for $k=n$) and hence $H(\mathbf{p})=1$. We also have $\mathbf{p}=\mathbf{s}$ (see the paragraph before Remark~\ref{rem:7sfds}).
\end{proof}

\begin{remark}
Put in other words, Proposition~\ref{prop:p1} states that
\begin{enumerate}
\item[$(i)$] Under positiveness of the functions $q_j$, the mass of $\mathbf{I}_{\mathrm{BP}}$ is concentrated on exactly one
coordinate ($\mathbf{I}_{\mathrm{BP}}$ is a Dirac measure) if and only if exactly one component of the system is relevant.

\item[$(ii)$] Under positiveness of the function $q$, the mass of $\mathbf{p}$ is concentrated on exactly one coordinate ($\mathbf{p}$ is a
Dirac measure) if and only if the system is of $k$-out-of-$n$ type for some $k$.
\end{enumerate}
Similarly, Proposition~\ref{prop:p2} states that
\begin{enumerate}
\item[$(i)$] For a $k$-out-of-$n$ system, if the functions $q_j$ $(j=1,\ldots,n)$ have the form (\ref{eq:qkaa}), then $\mathbf{I}_{\mathrm{BP}}$ is uniform.

\item[$(ii)$] For a system with only one relevant component, if the function $q$ is symmetric, then $\mathbf{p}$ is uniform.
\end{enumerate}
Even though the positiveness of the functions $q_j$ and $q$ seems restrictive, it is a rather natural assumption and actually necessary for Proposition~\ref{prop:p1} to hold.
\end{remark}

\section*{Acknowledgments}

This research is supported by the internal research project F1R-MTH-PUL-12RDO2 of the University of Luxembourg.

\end{document}